\documentclass[12pt]{amsart}
\usepackage{enumerate}
\usepackage{caption}
\usepackage{tikz}
\usepackage{hyperref}
\usepackage{float}
\usepackage{longtable}
\usepackage{blindtext}
\usepackage{adjustbox}
\allowdisplaybreaks
\usepackage{longtable}
\usepackage{amsfonts,amssymb,amsmath,textcomp}
\usepackage{enumitem}
\usepackage{orcidlink}
\usepackage{lipsum}
\usepackage{supertabular}
\usepackage[margin=1in]{geometry}
 2

\theoremstyle{plain}
\newtheorem{thm}{Theorem}[section]

\newtheorem{prop}[thm]{Proposition}

\theoremstyle{definition}
\newtheorem{defn}[thm]{Definition}

\begin{document}
\baselineskip 6.1mm
\title{A primality test for $Kp^\ell - 1$ numbers }

\author[Anuj Jakhar]{Anuj Jakhar\,\orcidlink{0000-0002-8733-0007}}
\author[Mahesh Kumar Ram]{Mahesh Kumar Ram\,\orcidlink{0000-0002-5692-5761}}

\address[Anuj Jakhar, Mahesh Kumar Ram]{Department of Mathematics, IIT Madras, Chennai, India - 600036}

\email[Anuj Jakhar]{anujjakhar@iitm.ac.in; anujiisermohali@gmail.com}
\email[Mahesh Kumar Ram]{maheshkumarram621@gmail.com}

\subjclass[2020]{11A51, 11Y11, 11Y40}

\keywords{Primality test, Lucas sequences, Carmichael numbers, Quadratic fields, Miller-Rabin test}

\begin{abstract}
We develop an algebraic framework over arbitrary quadratic fields $L = \mathbb{Q}(\sqrt{D})$ to generalize the Miller-Rabin primality test. Consequently, we present a deterministic primality test for integers of the form $N = K p^{\ell} - 1$ that requires only a single modular exponentiation and achieves a computational complexity of $\tilde{\mathcal{O}}(\log^2 N)$.  Furthermore, we also establish an analogue of Korselt's criterion within this setting. Finally, computational data generated using SageMath confirm its efficiency, successfully establishing the primality of numbers in the associated quadratic field within milliseconds.
\end{abstract}
\maketitle

\section{Introduction}

The problem of constructing efficient deterministic primality tests has long been a central topic in computational number theory. Classical tests based on Fermat's Little Theorem, together with its probabilistic refinement, the Miller--Rabin test, provide effective methods for detecting composite numbers and probable primes.  For integers of special forms, classical results of Lucas and Lehmer provide fast and deterministic methods for testing primality, such as the Lucas--Lehmer test for Mersenne numbers $2^n - 1$ and Proth's theorem for numbers of the form $k \cdot 2^n + 1$. These ideas have been extensively generalized to wider classes of integers (cf. \cite{MR1487359,MR2047101,Bos,D-H,MR3464611,G-O-S,G-O-A-S,MR2254741,S-W,t:hal-05192417,Wil3,MR638738,Wil,Wil2}).

Early generalizations were based on properties of Lucas sequences and focused on integers of the form $Ap^n-1$. For example, Williams ~\cite{Wil3} obtained necessary and sufficient conditions for the primality of integers of the form $2A3^n-1$. Further developments include tests for integers of the forms $A5^n-1$ and $A7^n-1$~\cite{Wil2}, as well as explicit criteria for $(p-1)p^n-1$~\cite{S-W}. In a different direction, Bosma ~\cite{Bos} established explicit primality criteria for integers of the form $h2^k\pm 1$ using a finite set of initial values of certain recurrence sequences.

In recent years, algebraic methods based on quadratic fields have been employed to study such problems. Grau et al.~\cite{G-O-A-S} developed primality tests for integers of the form $Kp^n+1$ requiring only a single modular exponentiation. They also introduced a Lucas-type test for integers of the form $4Kp^n-1$ using the group
\[
\mathcal{G}_N := \{ z \in (\mathbb{Z}/N\mathbb{Z})[i] : z\overline{z} \equiv 1 \pmod{N} \}
\]
over the Gaussian integers~\cite{G-O-S}. This approach was further extended by Sridhar et al.~\cite{t:hal-05192417}, who studied integers of the form $8kp^n-1$ via the structure of a subgroup defined over $\mathbb{Q}(\sqrt{-2})$.

In this article, we significantly extend these ideas to arbitrary quadratic fields $L = \mathbb{Q}(\sqrt{D})$. By determining the order of the associated unitary group over finite fields, we define a natural analogue of Euler's totient function in this setting. We also introduce a corresponding notion of Carmichael-type numbers and establish an analogue of Korselt's criterion.

As an application, we obtain a deterministic Lucasian-type primality test for integers of the form $N=c_D k p^\ell -1$. The test requires only a single modular exponentiation in the associated quotient ring and has complexity $\tilde{\mathcal{O}}(\log^2 N)$.

\section{Statement of Results}
In this article, we provide a criterion for testing the primality of odd integers of the form $N = c_D k p^{\ell} - 1$. Throughout this paper, let $L = \mathbb{Q}(\sqrt{D})$ be a quadratic field, where $D$ is a square-free integer. Unless otherwise specified, $p$ denotes an odd prime, and $k, \ell$ are positive integers. We assume $c_D$ is a fixed positive integer such that $\gcd(c_D k, p) = 1$ and the Jacobi symbol  satisfies $\left(\frac{D}{N}\right) = -1$. 

It is worth noting that in the case where $D < 0$ is a square-free integer, one may define
\[
c_D =
\begin{cases}
2|D| & \text{if } D \equiv 1 \pmod{4}, \\
4|D| & \text{if } D \equiv 2, 3 \pmod{4}.
\end{cases}
\]
With this specific choice of $c_D$, we have $N \equiv -1 \pmod{|D|}$. Consequently, by the properties of the Jacobi symbol, we obtain $\left(\frac{D}{N}\right) = -1$.

Let $\mathcal{O}_L$ denote the ring of integers of $L$. For an integer $n \ge 2$, the quotient ring $\mathcal{O}_L / n\mathcal{O}_L$ is isomorphic to the ring $\{a + b\sqrt{D} : a,b \in \mathbb{Z}/n\mathbb{Z}\}$ if $D \equiv 2, 3 \pmod{4}$. If $D \equiv 1 \pmod{4}$, it is isomorphic to $\{a + b\omega : a,b \in \mathbb{Z}/n\mathbb{Z}\}$, where $\omega = \frac{1+\sqrt{D}}{2}$. 

Based on this, we represent the quotient ring $\mathcal{O}_L / n\mathcal{O}_L$ by $\mathcal{I}_n(D)$ and define the corresponding set $\mathcal{G}_n(D)$ as follows. For $D \equiv 2,3 \pmod{4}$, 
$$
\mathcal{I}_n(D) := \{a + b\sqrt{D} : a,b \in \mathbb{Z}/n\mathbb{Z}\},
$$
and
$$
\mathcal{G}_n(D) := \{a + b\sqrt{D} \in \mathcal{I}_n(D) : a^2 - D b^2 \equiv 1 \pmod{n}\}.
$$
For $D \equiv 1 \pmod{4}$, 
$$
\mathcal{I}_n(D) := \{a + b\omega : a,b \in \mathbb{Z}/n\mathbb{Z}\},
$$
and
$$
\mathcal{G}_n(D) := \left\{a + b\omega \in \mathcal{I}_n(D) : a^2 + ab + \left(\frac{1 - D}{4}\right)b^2 \equiv 1 \pmod{n} \right\}.
$$

\noindent
\textbf{Remark.} For an odd integer $n>2$, $2$ is a unit in $\mathbb{Z}/n\mathbb{Z}$. Thus, the ring $\mathcal{O}_L / n\mathcal{O}_L$ is isomorphic to $\mathbb{Z}[\sqrt{D}]/n\mathbb{Z}[\sqrt{D}]$. Since our primality test applies exclusively to odd integers $n$, without loss of generality, we may define our algebraic structures entirely over the ring $\{a + b\sqrt{D} : a,b \in \mathbb{Z}/n\mathbb{Z}\}$.\\

With the above notations, we prove the following results.

\begin{thm}\label{T1.1}
Let $N = c_D k p^\ell - 1$ be an odd integer such that $c_Dk < p^{\ell}$. Suppose that there exists an element $w \in \mathcal{G}_N(D)$ such that 
$w^{\frac{N+1}{p}} \not\equiv 1 \pmod{N}$. Then $N$ is prime if and only if
$$
\Phi_p\left(w^{\frac{N+1}{p}}\right) \equiv 0 \pmod{N},
$$
where $\Phi_p(x)$ denotes the $p$-th cyclotomic polynomial.
\end{thm}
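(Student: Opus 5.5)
The proof splits into the two directions; the converse (sufficiency) carries the real content and follows a Pocklington/Proth-type argument. Throughout, $N$ is odd, so we work in $\mathcal{I}_N(D)=\{a+b\sqrt{D}\}$ as in the Remark, and $\mathcal{G}_N(D)$ is a multiplicative group: the norm is multiplicative, and the inverse of $a+b\sqrt{D}$ is its conjugate.

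\emph{Necessity.} Suppose $N$ is prime. Since $\left(\frac{D}{N}\right)=-1$, the ring $\mathcal{I}_N(D)$ is the field $\mathbb{F}_{N^2}$ and conjugation is the Frobenius $z\mapsto z^N$. Hence $\mathcal{G}_N(D)$ is exactly the kernel of the norm map $z\mapsto z^{N+1}$ from $\mathbb{F}_{N^2}^\times$ to $\mathbb{F}_N^\times$. This kernel is cyclic of order $N+1$, so $w^{N+1}=1$.

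Put $u=w^{(N+1)/p}$. Then $u^p=1$ and $u\neq 1$ by hypothesis, so $u$ is a root of $(x^p-1)/(x-1)=\Phi_p(x)$ in the field. This gives $\Phi_p(u)\equiv 0\pmod N$.

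\emph{Sufficiency.} Assume $\Phi_p(u)\equiv 0 \pmod N$ and suppose $N$ is composite. Since $c_Dk<p^\ell$, we have $N+1<p^{2\ell}$. Let $q$ be any prime divisor of $N$ and reduce modulo $q$; $q$ is odd, and $q\ne p$ because $N\equiv -1\pmod p$.

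First, $u^p-1=(u-1)\Phi_p(u)\equiv 0$, so $u^p\equiv 1 \pmod q$. Next, $u\not\equiv 1\pmod q$: if it were, then $\Phi_p(u)\equiv \Phi_p(1)=p\pmod q$, and $\Phi_p(u)\equiv 0$ would force $q\mid p$, which is impossible. Thus $\bar u$ has order exactly $p$ in the group $\mathcal{G}_q(D)$.

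Since $u=w^{(N+1)/p}$, the order of $\bar w$ in $\mathcal{G}_q(D)$ divides $N+1$ but not $(N+1)/p$. Hence $p^\ell$ divides the order of $\bar w$, and therefore $p^\ell$ divides $|\mathcal{G}_q(D)|$.

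The order of this group is known. It equals $q-\left(\frac{D}{q}\right)$ when $q\nmid D$, since it is the norm-one torus, of order $q+1$ in the inert case and $q-1$ in the split case. When $q\mid D$ it equals $2q$, from the solutions of $a^2\equiv1$ with $b$ arbitrary. I would compute this directly, or cite it from the paper's later computation of the order of the unitary group over finite fields.

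In the ramified case, $p^\ell\mid 2q$ with $p$ odd and $p\ne q$ would force $p^\ell\mid 2$, which is impossible. So $p^\ell \mid q\pm1$, giving $q\ge p^\ell-1$.

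\emph{Main obstacle.} This bound, $q \ge p^\ell - 1$, is weaker than the $q>\sqrt N$ needed for a clean contradiction, and closing the gap is the hardest step. Here is what I would try first.

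Since $N$ is composite, choose $q$ to be its smallest prime factor, so $q^2\le N$. Every prime factor $r$ of $N$ satisfies $r\equiv \pm1 \pmod{p^\ell}$. Writing $N/q$ as a product of such primes, $N/q\equiv \pm1\pmod{p^\ell}$ and $N/q>1$, so $N/q\ge p^\ell-1$. Combined with $q\ge p^\ell-1$ this gives
\[
N\ge (p^\ell-1)^2.
\]
Meanwhile $N=c_Dkp^\ell-1\le (p^\ell-1)p^\ell-1$.

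Now use congruences. We have $N\equiv -1 \pmod{p^\ell}$, while $q\equiv \epsilon_1$ and $N/q\equiv\epsilon_2$ with $\epsilon_1,\epsilon_2\in\{\pm1\}$, so $\epsilon_1\epsilon_2=-1$. One factor is therefore $\equiv 1 \pmod{p^\ell}$, hence at least $p^\ell+1$, and the other is at least $p^\ell-1$. This gives
\[
N\ge (p^\ell+1)(p^\ell-1)=p^{2\ell}-1>(p^\ell-1)p^\ell-1\ge N,
\]
a contradiction. So $N$ is prime.

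I would double-check the edge case in which $N/q$ is not prime but is a product of primes whose signs combine. The argument only uses the residue of $N/q$ modulo $p^\ell$ and its size, so it should go through.
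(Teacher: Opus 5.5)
Your proof is correct. Up to the conclusion $p^\ell\mid q\pm1$ it is the paper's argument: the paper says $\beta=w^{c_Dk}$ has order exactly $p^\ell$ modulo $q$, which is equivalent to your statement that $p^\ell$ divides the order of $\bar w$. The two routes differ only at the step you called the main obstacle.

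At that step you overlooked an easy parity observation. Since $q\pm1$ is even and $p^\ell$ is odd, $p^\ell\mid q\pm1$ already forces $q\pm1\ge 2p^\ell$, hence $q>p^\ell$. The paper uses parity to get $p^\ell<q$, takes $q\le\sqrt N$, and finishes in one line: $p^{2\ell}<q^2\le N<c_Dkp^\ell$, which contradicts $c_Dk<p^\ell$.

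Your replacement is also valid. Every prime factor of $N$ is $\equiv\pm1\pmod{p^\ell}$. Products of $\pm1$ stay in $\{\pm1\}$, so $N/q\equiv\pm1$ as well, and the edge case you worried about is harmless. Then $N\equiv-1\pmod{p^\ell}$ forces one of the two factors to be $\equiv1$, whence $N\ge p^{2\ell}-1$.

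This is the classical Lucas-style closing. It does not need the minimality of $q$, since any factorization $N=ab$ with $a,b>1$ works, and it does not rely on parity. It is, however, longer than the paper's one-line finish.

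Combining the two ideas gives a bit more. All prime factors of $N$, and $N$ itself, are $\pm1$ modulo $2p^\ell$. Hence $N\ge 4p^{2\ell}-1$ for composite $N$, so the sufficiency direction would survive under the weaker hypothesis $c_Dk<4p^\ell$.

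Finally, your ramified case $q\mid D$ never occurs. The condition $\left(\frac{D}{N}\right)=-1$ forces $\gcd(D,N)=1$, which is what the paper tacitly uses when it applies Theorem~\ref{T2.1} to $q$.
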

 
The following theorem provides a sufficient condition for primality.

\begin{thm}[Generalized Lucasian Certificate]\label{T1.2}
Let $N = c_D k p^\ell - 1$ be an odd integer and $w \in \mathcal{G}_N(D)$. Suppose there exists an integer $j$ with $1 \le j \le \ell$ such that:
\begin{enumerate}
    \item[(i)] $\Phi_p(w^{c_D k p^{j-1}}) \equiv 0 \pmod{N}$,
    \item[(ii)] $2j \ge \log_p(c_D k) + \ell$.
\end{enumerate}
Then $N$ is prime.
\end{thm}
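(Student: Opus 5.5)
Take a prime divisor $q$ of $N$; the aim is to show $q > \sqrt{N}$, which forces $N$ prime. Since $N$ is odd, $q$ is odd. Reduce $w$ modulo $q$: write $u = w \bmod q \in \mathcal{G}_q(D)$, the norm-one group of $\mathbb{Z}[\sqrt{D}]/q\mathbb{Z}[\sqrt{D}]$. From (i) we get $\Phi_p(u^{c_Dkp^{j-1}}) \equiv 0 \pmod q$.

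\textbf{Step 1: order of $u$.} Set $v = u^{c_Dkp^{j-1}}$. Then $v^p - 1 = (v-1)\Phi_p(v) = 0$ in the quotient ring. We must also exclude $v=1$. If $v = 1$, then $\Phi_p(v) = \Phi_p(1) = p$. Since $q \mid N = c_Dkp^\ell - 1$, we have $q \nmid p$, so $p$ is a unit mod $q$, a contradiction. Hence $v$ has order exactly $p$. Consequently $p^j$ divides the order of $u$ in $\mathcal{G}_q(D)$: write $\operatorname{ord}(u) = p^a m$ with $p \nmid m$; then $u^{c_Dkp^{j-1}}$ has order $p^{a-\min(a,j-1)}$ up to the prime-to-$p$ part, using $\gcd(c_Dk,p)=1$. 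The only way to get order $p$ is $a = j$.

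\textbf{Step 2: size of $\mathcal{G}_q(D)$.} Assume the computation of $|\mathcal{G}_q(D)|$ for an odd prime $q$, from the paper's treatment of the unitary group over finite fields:
\begin{itemize}
\item it equals $q - \left(\frac{D}{q}\right)$ if $q \nmid D$;
\item it equals $2q$ if $q \mid D$.
\end{itemize}
One can also check this directly: if $D$ is a nonsquare mod $q$, the ring is $\mathbb{F}_{q^2}$ and the norm map onto $\mathbb{F}_q^*$ is surjective, giving $q+1$. If $D$ is a square, the ring is $\mathbb{F}_q\times\mathbb{F}_q$ and the norm is $xy$, giving $q-1$. If $q \mid D$, the equation is $a^2=1$ with $b$ free, giving $2q$. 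By Lagrange, $p^j$ divides $q \pm 1$ or $2q$. In the last case $p^j \mid 2q$ with $p$ odd forces $q = p$, which is impossible since $q \nmid p$. Hence $p^j \mid q \pm 1$, so $q \ge p^j - 1$.

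\textbf{Step 3: conclude.} Suppose $N$ is composite. Then it has a prime factor $q \le \sqrt N$, so $(p^j-1)^2 \le q^2 \le N = c_Dkp^\ell - 1$. Condition (ii) gives $p^{2j} \ge c_Dkp^\ell = N+1$. Now rule out equality cases carefully. If $q = p^j - 1$, then $q$ is even when $p$ is odd, contradicting $q$ odd. So $q \ge p^j + 1$, and then $q^2 > p^{2j} \ge N+1 > N$, a contradiction. Therefore $N$ is prime.

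\textbf{Main obstacle.} The delicate points are the group-order computation of $\mathcal{G}_q(D)$, especially the ramified case $q \mid D$, and handling the boundary $q = p^j \pm 1$. The parity observation handles the boundary neatly.
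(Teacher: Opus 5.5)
Your proof is correct and follows essentially the same route as the paper. Both arguments assume a prime factor $q \le \sqrt{N}$, use $\Phi_p(1)=p\not\equiv 0 \pmod q$ to force $p^j \mid |\mathcal{G}_q(D)| = q\pm 1$, use parity to get $q>p^j$, and square to contradict (ii). The differences are cosmetic. The paper tracks the order of $X=w^{c_Dk}$, which is exactly $p^j$ with no gcd bookkeeping, rather than the order of $w$. It also does not need your ramified case $q\mid D$, since the standing hypothesis $\left(\frac{D}{N}\right)=-1$ already forces $\gcd(D,N)=1$.
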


One may ask whether condition $(i)$ of Theorem~\ref{T1.2} holds for some $j$. The following theorem guarantees that condition $(i)$ of Theorem~\ref{T1.2} holds whenever $N$ is prime.

 \begin{thm}[Generalized Miller--Rabin]\label{T1.3}
Let $N = c_D k p^\ell - 1$ be an odd prime and $w \in \mathcal{G}_N(D)$. Then one of the following holds:
\begin{enumerate}
    \item[(i)] $w^{c_D k} \equiv 1 \pmod{N}$;
    \item[(ii)] there exists an integer $j$ with $0 \leq j < \ell$ such that
    $\Phi_p\bigl(w^{c_D k p^j}\bigr) \equiv 0 \pmod{N}$.
\end{enumerate}
\end{thm}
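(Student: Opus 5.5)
The plan is to show first that for a prime $N$ with $\left(\frac{D}{N}\right)=-1$, every element of $\mathcal{G}_N(D)$ satisfies $w^{N+1}\equiv 1 \pmod N$. After that, the argument is the same descent as in the classical Miller--Rabin test, except that square roots of unity are replaced by $p$-th roots of unity.

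\textbf{Step 1: the group order.} Since $N$ is an odd prime and $D$ is a non-residue modulo $N$, the polynomial $x^2-D$ is irreducible over $\mathbb{F}_N$. Hence $\mathcal{I}_N(D)\cong \mathbb{F}_N[\sqrt{D}]\cong\mathbb{F}_{N^2}$, by the Remark on odd $n$. In this field the Frobenius map $z\mapsto z^N$ sends $a+b\sqrt{D}$ to its conjugate $a-b\sqrt{D}$. This holds because $(\sqrt D)^{N}=D^{(N-1)/2}\sqrt D=-\sqrt D$ by Euler's criterion. So the defining condition $a^2-Db^2\equiv 1$ says exactly that $z\cdot z^N = z^{N+1}=1$. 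Thus $\mathcal{G}_N(D)$ is the kernel of the norm map $\mathbb{F}_{N^2}^\times\to\mathbb{F}_N^\times$. In particular every $w\in\mathcal{G}_N(D)$ satisfies $w^{N+1}=w^{c_Dkp^\ell}\equiv 1\pmod N$. (For $D\equiv 1 \pmod 4$ one passes to the $\sqrt D$-coordinates, since $2$ is invertible.)

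\textbf{Step 2: the descent.} Set $x_j = w^{c_Dkp^j}$ for $0\le j\le \ell$, so that $x_{j+1}=x_j^p$ and $x_\ell=1$. If $x_0=1$, we are in case (i). Otherwise, let $j$ be the largest index in $\{0,\dots,\ell-1\}$ with $x_j\ne 1$. Such a $j$ exists because $x_0\neq 1$, and maximality gives $x_{j+1}=x_j^p=1$. Then
\[
0 = x_j^p-1 = (x_j-1)\,\Phi_p(x_j)
\]
in the field $\mathbb{F}_{N^2}$. Since $x_j-1\neq 0$ and a field has no zero divisors, $\Phi_p(x_j)=0$, which is case (ii).

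The only real content is Step 1, specifically identifying $\mathcal{I}_N(D)$ as a field and conjugation as Frobenius. The place needing care is checking that the field structure, and hence the absence of zero divisors used in Step 2, really does require $\left(\frac{D}{N}\right)=-1$. Everything else is formal.
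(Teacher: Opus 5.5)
Your proof is correct and follows the paper's argument. The paper likewise first obtains $w^{N+1}=w^{c_Dkp^\ell}\equiv 1 \pmod N$, by citing Theorem~\ref{T2.1} for $|\mathcal{G}_N(D)|=N+1$ together with Lagrange, where you instead verify it directly via Frobenius $z^N=\overline{z}$. It then runs the same descent, taking the smallest $r$ with $w^{c_Dkp^r}\equiv 1$, which is exactly your $j+1$.
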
  
Motivated by the above theorem, we define the concept of a generalized strong probable prime as follows.

\noindent\textbf{Definition.}
Let $N = c_D k p^l - 1$ be an odd composite integer with $\gcd(N, D) = 1$, and let $w \in \mathcal{G}_N(D)$. We say that $N$ is a \emph{$p$-generalized strong probable prime to base $w$} if it satisfies either condition (i) or condition (ii) of Theorem \ref{T1.3}.\\

In a later section, we show that the group $\mathcal{G}_N(D)$ is cyclic of order $N \pm 1$ whenever $N$ is an odd prime. Hence, there exists an element $w \in \mathcal{G}_N(D)$ of order $N \pm 1$. Such an element does not satisfy condition~(i) of Theorem~\ref{T1.3}. Therefore, by Theorem~\ref{T1.3}, there exists an integer $j$ with $1 \le j \leq \ell$ such that $\Phi_p\bigl(w^{c_D k p^{j-1}}\bigr) \equiv 0 \pmod{N}$. This shows that condition~(i) of Theorem~\ref{T1.2} holds whenever $N$ is an odd prime.

The Miller--Rabin primality test applies to numbers of the form $2^s u + 1$, where $u$ is a positive odd integer and $s$ is a positive integer. We prove the following theorem, which helps test the primality of numbers of the form $2^s u - 1$, where $u$ is a positive odd integer and $s$ is a positive integer.
\begin{thm}\label{T1.4}
Let $N$ be an odd prime and let $D$ be a square-free integer such that $\left(\frac{D}{N}\right) = -1$. Write $N + 1 = 2^s u$, where $u$ is an odd integer and $s \ge 1$. For any $w \in \mathcal{G}_N(D) \setminus \{1, -1\}$, one of the following conditions must hold:
\begin{enumerate}
    \item[(a)] $w^u \equiv 1 \pmod N$.
    \item[(b)] $w^{ 2^ru} \equiv -1 \pmod N$ for some integer $0 \le r < s$.
\end{enumerate} 
\end{thm}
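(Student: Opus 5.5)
The plan is to transplant the classical Miller--Rabin argument from $(\mathbb{Z}/N\mathbb{Z})^\times$ to the norm-one group $\mathcal{G}_N(D)$. Since $N$ is an odd prime and $\left(\frac{D}{N}\right)=-1$, the polynomial $x^2-D$ is irreducible over $\mathbb{F}_N$. By the Remark preceding the theorems, $\mathcal{I}_N(D)\cong \mathbb{Z}[\sqrt{D}]/N\mathbb{Z}[\sqrt{D}]\cong \mathbb{F}_N[x]/(x^2-D)\cong \mathbb{F}_{N^2}$ is a field.

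The first step is to identify $\mathcal{G}_N(D)$ as the kernel of the norm map $\mathbb{F}_{N^2}^\times\to\mathbb{F}_N^\times$. The Frobenius $z\mapsto z^N$ is the nontrivial automorphism of $\mathbb{F}_{N^2}/\mathbb{F}_N$. Since $(\sqrt{D})^N=D^{(N-1)/2}\sqrt{D}=-\sqrt{D}$ by Euler's criterion, Frobenius sends $a+b\sqrt{D}$ to $a-b\sqrt{D}$. Hence
\[
a^2-Db^2=(a+b\sqrt{D})(a-b\sqrt{D})=(a+b\sqrt{D})^{N+1}.
\]
So $\mathcal{G}_N(D)=\{z\in\mathbb{F}_{N^2}^\times : z^{N+1}=1\}$. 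Because the norm map is surjective, this group is cyclic of order exactly $N+1$, being a subgroup of the cyclic group $\mathbb{F}_{N^2}^\times$. In particular, every $w\in\mathcal{G}_N(D)$ satisfies $w^{N+1}=w^{2^s u}\equiv 1\pmod N$. The case $D\equiv 1\pmod 4$ reduces to this one via the Remark, since $N$ is odd.

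The second step is the standard square-root chain. Consider the sequence $w^u, w^{2u}, \dots, w^{2^s u}=1$, where each term is the square of the previous one. If $w^u\equiv 1$, then (a) holds. Otherwise, let $r$ be the largest index with $0\le r<s$ and $w^{2^r u}\not\equiv 1$. Then $x=w^{2^r u}$ satisfies $x^2=1$ in the field $\mathbb{F}_{N^2}$. Since $x^2-1=(x-1)(x+1)$ has only the roots $\pm1$ in a field, we get $x=-1$, which is (b). Here $-1\neq 1$ because $N$ is odd.

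The only genuine obstacle is the first step: proving that $\mathcal{I}_N(D)$ is a field and that norm-one elements satisfy $w^{N+1}=1$. Both facts follow from the irreducibility of $x^2-D$ and from Euler's criterion for the Frobenius action. The hypothesis $w\neq\pm1$ is not needed for the dichotomy itself; it only excludes trivial bases.
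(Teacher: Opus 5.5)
Your proof is correct and is essentially the paper's argument: $\mathcal{G}_N(D)$ has order $N+1$ inside the field $\mathcal{I}_N(D)\cong\mathbb{F}_{N^2}$, and in the squaring chain $w^u, w^{2u},\dots,w^{2^s u}=1$ the last term not equal to $1$ must be $-1$, since $\pm1$ are the only square roots of $1$ in a field. The differences are cosmetic. You rederive $|\mathcal{G}_N(D)|=N+1$ via Frobenius and Euler's criterion instead of citing Theorem~\ref{T2.1}, and you take the largest $r<s$ with $w^{2^r u}\not\equiv 1$ rather than the smallest $r+1$ with $w^{2^{r+1}u}\equiv 1$, which is the same index.
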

We remark that Theorem~\ref{T1.4} ensures that if an odd integer $N$ fails to satisfy both conditions of Theorem~\ref{T1.4} for a randomly chosen $w \in \mathcal{G}_N(D) \setminus \{1, -1\}$, then $N$ must be composite. We present the proof of Theorem~\ref{T1.4} in Section~5. In this section, we illustrate how to use Theorem~\ref{T1.4} to test the primality of an integer by means of an example.

The organization of this article is as follows. Section 3 contains key results that form the backbone of the paper and are used throughout. The proofs of Theorems~\ref{T1.1}, \ref{T1.2}, and \ref{T1.3} are presented in Section 4, The proof of Theorem~\ref{T1.4} is given in Section 5, along with illustrative examples. Section~6 describes the generalized primality testing algorithm and establishes its $\tilde{\mathcal{O}}(\log^2 N)$ computational complexity. In Section 7, we extend the notions of pseudoprimes and Carmichael numbers, provide a sufficient criterion for detecting composite numbers, and establish an analogue of Korselt's criterion. Section~8 presents the computational results and execution times of the algorithm over various quadratic fields. The final section contains concluding remarks.

\section{Preliminaries}
We prove the following theorem, which is crucial to this article.

\begin{thm} \label{T2.1}
Let $p$ be an odd prime such that $p \nmid D$. Then the order of the group $\mathcal{G}_p(D)$ is given by
$$
|\mathcal{G}_p(D)| = p - \left(\frac{D}{p}\right),
$$
where $\left(\frac{\cdot}{p}\right)$ denotes the Legendre symbol. Furthermore, the group $\mathcal{G}_p(D)$ is cyclic.
\end{thm}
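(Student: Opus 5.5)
By the Remark, since $p$ is odd we may work in $R=\mathbb{F}_p[\sqrt{D}] \cong \mathbb{F}_p[x]/(x^2-D)$. In this model $\mathcal{G}_p(D)$ is the kernel of the norm map $\mathrm{N}(a+b\sqrt{D}) = a^2 - Db^2$ restricted to the unit group $R^\times$. The norm is multiplicative, and an element of norm $1$ is a unit with inverse $a - b\sqrt{D}$. Hence $\mathcal{G}_p(D)$ is a subgroup of $R^\times$ and equals $\ker(\mathrm{N}: R^\times \to \mathbb{F}_p^\times)$. Since $p \nmid D$, the Legendre symbol $\left(\frac{D}{p}\right)$ is $\pm 1$, and I would split into these two cases.

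\emph{Case $\left(\frac{D}{p}\right) = -1$.} Here $x^2 - D$ is irreducible, so $R \cong \mathbb{F}_{p^2}$ and the conjugation $a+b\sqrt{D} \mapsto a - b\sqrt{D}$ is the Frobenius $z \mapsto z^p$. Indeed $(\sqrt{D})^p = D^{(p-1)/2}\sqrt{D} = -\sqrt{D}$ by Euler's criterion. Thus $\mathrm{N}(z) = z^{p+1}$. The map $z \mapsto z^{p+1}$ on the cyclic group $\mathbb{F}_{p^2}^\times$ of order $p^2-1$ has kernel of size exactly $p+1$, because a cyclic group contains exactly $d$ solutions of $z^d = 1$ for each divisor $d$ of its order. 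So $\mathcal{G}_p(D)$ is the unique subgroup of order $p+1$. It is cyclic, being a subgroup of a cyclic group, and its order is $p - \left(\frac{D}{p}\right)$.

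\emph{Case $\left(\frac{D}{p}\right) = 1$.} Choose $t \in \mathbb{F}_p$ with $t^2 = D$; then $t \neq 0$ since $p \nmid D$. By the Chinese Remainder Theorem, $R \cong \mathbb{F}_p \times \mathbb{F}_p$ via $a + b\sqrt{D} \mapsto (a+bt,\, a-bt)$. This is a bijection because its matrix has determinant $-2t \neq 0$ in $\mathbb{F}_p$, using that $p$ is odd. Under this isomorphism the norm becomes $(u,v) \mapsto uv$. Therefore $\mathcal{G}_p(D) \cong \{(u, u^{-1}) : u \in \mathbb{F}_p^\times\} \cong \mathbb{F}_p^\times$, which is cyclic of order $p-1 = p - \left(\frac{D}{p}\right)$.

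I expect no serious obstacle. The only delicate point is justifying that the inert-case norm is $z^{p+1}$, via the Frobenius identity above. An alternative, if one prefers counting, is to note that the norm map $\mathbb{F}_{p^2}^\times \to \mathbb{F}_p^\times$ is surjective, which gives the kernel size $(p^2-1)/(p-1) = p+1$ directly. Cyclicity in both cases then follows from the standard fact that finite subgroups of the multiplicative group of a field are cyclic.
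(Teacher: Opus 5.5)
Your proof is correct and follows essentially the same route as the paper. In the split case your CRT map $a+b\sqrt{D}\mapsto(a+bt,\,a-bt)$ is exactly the paper's substitution $u=x-cy$, $v=x+cy$, but you get a genuine ring isomorphism, which the paper's bare count needs (and does not supply) for its claim $\mathcal{G}_p(D)\cong\mathbb{F}_p^\times$. In the inert case both arguments take the kernel of $z\mapsto z^{p+1}$ on $\mathbb{F}_{p^2}^\times$, and your root-of-unity count spares you the surjectivity of the norm that the paper asserts without proof. For $D\equiv 1 \pmod 4$ you lean on the Remark where the paper computes explicitly, so you should add that $a+b\omega\mapsto (a+\tfrac{b}{2})+\tfrac{b}{2}\sqrt{D}$ carries $a^2+ab+\frac{1-D}{4}b^2$ to $a'^2-Db'^2$; this is precisely the paper's identity $(2x+y)^2-Dy^2=4\bigl(x^2+xy+\frac{1-D}{4}y^2\bigr)$.
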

\begin{proof}
Clearly, $D \equiv 1,2,3 \pmod{4}$. We first prove the theorem for the case $D \equiv 2,3 \pmod{4}$, and then for the case $D \equiv 1 \pmod{4}$.

\noindent
\textbf{Case 1:} Let $D \equiv 2,3 \pmod{4}$. Then
\[
\mathcal{G}_p(D) = \{a + b\sqrt{D} : a^2 - Db^2 \equiv 1 \pmod{p}\;\text{and}\; a,b \in \mathbb{Z}/p\mathbb{Z}\}.
\]
Thus, finding $|\mathcal{G}_p(D)|$ is equivalent to counting the number of solutions of
\begin{equation}\label{E1}
    x^2 - Dy^2 \equiv 1 \pmod{p}
\end{equation}
over the finite field $\mathbb{F}_p$.

\noindent
\textbf{Subcase 1:} Suppose $\left(\frac{D}{p}\right) = 1$. Then $D$ is a quadratic residue modulo $p$, so there exists $c \in \mathbb{F}_p$ such that $c^2 \equiv D \pmod{p}$. Using this in \eqref{E1}, we obtain
\begin{equation}\label{e1}
x^2 - c^2y^2 \equiv 1 \pmod{p}
\;\Longleftrightarrow\;
(x - cy)(x + cy) \equiv 1 \pmod{p}.
\end{equation}
Let $u = x - cy$ and $v = x + cy$. Since $p$ is odd and $c \not\equiv 0\pmod p$, this change of variables is bijective. Hence, \eqref{e1} is equivalent to
\[
uv \equiv 1 \pmod{p}.
\]
For each $u \in \mathbb{F}_p^\times$, there exists a unique $v = u^{-1}$. Therefore, the total number of solutions is $p-1$. Hence,
\[
|\mathcal{G}_p(D)| = p - 1 = p - \left(\frac{D}{p}\right).
\]

\noindent
\textbf{Subcase 2:} Suppose $\left(\frac{D}{p}\right) = -1$. Then $D$ is not a square in $\mathbb{F}_p$. This implies that $t^2 - D$ is irreducible over $\mathbb{F}_p$. Therefore, $\frac{\mathbb{F}_p[t]}{\langle t^2 - D \rangle} \cong \mathbb{F}p(\sqrt{D})$ is a field of order $p^2$, and this field is isomorphic to $\mathbb{F}_{p^2}$.

Consider the norm map $\mathcal{N} : \mathbb{F}_{p}(\sqrt{D})^\times \to \mathbb{F}_p^\times$ defined by
\[
\mathcal{N}(\alpha) = \alpha \cdot \alpha^p = \alpha^{p+1}.
\]
For $\alpha = a + b\sqrt{D}$, we have
\[
\mathcal{N}(a + b\sqrt{D}) = a^2 - Db^2.
\]
Thus, $\mathcal{G}_p(D)$ is exactly the kernel of the norm map. Since $\mathcal{N}$ is a surjective homomorphism, it follows that
\[
|\mathcal{G}_p(D)| = \frac{|\mathbb{F}_{p}(\sqrt{D})^\times|}{|\mathbb{F}_p^\times|}
= \frac{p^2 - 1}{p - 1} = p + 1.
\]
Hence,
\[
|\mathcal{G}_p(D)| = p + 1 = p - \left(\frac{D}{p}\right).
\]

\noindent
\textbf{Case 2:} Let $D \equiv 1 \pmod{4}$. Then
\[
\mathcal{G}_p(D) = \left\{a + b\omega : a^2 + ab + \left(\frac{1-D}{4}\right)b^2 \equiv 1 \pmod{p},\; a,b \in \mathbb{Z}/p\mathbb{Z}\right\},
\]
where $\omega = \frac{1 + \sqrt{D}}{2}$.

Thus, to determine $|\mathcal{G}_p(D)|$, it suffices to count the number of solutions to
\[
x^2 + xy + \left(\frac{1-D}{4}\right)y^2 \equiv 1 \pmod{p}
\]over the finite field $\mathbb{F}_p$.
Since $p$ is odd, multiplying both sides by $4$ gives
\begin{equation}\label{e2}
(2x + y)^2 - Dy^2 \equiv 4 \pmod{p}.
\end{equation}
Let $u = 2x + y$ and $v = y$. Then \eqref{e2} becomes
\[
u^2 - Dv^2 \equiv 4 \pmod{p}.
\]
Since $2 \not\equiv 0 \pmod{p}$, the transformation from $(x,y)$ to $(u,v)$ is bijective. Therefore, counting the solutions to the original equation is equivalent to counting the solutions to
\[
u^2 - Dv^2 \equiv 4 \pmod{p}.
\]
Setting $U = u/2$ and $V = v/2$, we obtain
\[
U^2 - DV^2 \equiv 1 \pmod{p}.
\]
Thus, the number of solutions is the same as in Case 1. Hence,
\[
|\mathcal{G}_p(D)| = p - \left(\frac{D}{p}\right).
\]
Finally, we prove that $\mathcal{G}_p(D)$ is cyclic. If $\left(\frac{D}{p}\right) = 1$, then $\mathcal{G}_p(D) \cong \mathbb{F}_p^\times$, which is cyclic. If $\left(\frac{D}{p}\right) = -1$, then $\mathcal{G}_p(D)$ is a subgroup of $\mathbb{F}_{p^2}^\times$, which is cyclic. Therefore, in both cases, $\mathcal{G}_p(D)$ is cyclic. This completes the proof.
\end{proof}
Using Chinese remainder theorem, it is easy to establish the following proposition.

\begin{prop}\label{P2.2}
Let $m, n \geq 2$ be coprime integers, and let $D$ be a square-free integer. Then there exists a canonical group isomorphism
\[
\mathcal{G}_{mn}(D) \simeq \mathcal{G}_m(D) \times \mathcal{G}_n(D).
\]
\end{prop}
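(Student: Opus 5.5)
The plan is to apply the Chinese remainder theorem at the level of rings and then restrict to the groups $\mathcal{G}$. Since $\gcd(m,n)=1$, the reduction maps give a ring isomorphism
\[
\mathbb{Z}/mn\mathbb{Z} \longrightarrow \mathbb{Z}/m\mathbb{Z} \times \mathbb{Z}/n\mathbb{Z}, \qquad x \mapsto (x \bmod m,\; x \bmod n).
\]
Apply this coordinatewise to get a bijection
\[
\psi : \mathcal{I}_{mn}(D) \to \mathcal{I}_m(D)\times\mathcal{I}_n(D), \qquad a+b\theta \mapsto \bigl((a \bmod m)+(b \bmod m)\theta,\; (a \bmod n)+(b \bmod n)\theta\bigr),
\]
where $\theta=\sqrt{D}$ if $D\equiv 2,3 \pmod 4$ and $\theta=\omega$ if $D\equiv 1\pmod 4$.

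Next, check that $\psi$ is a ring homomorphism. Multiplication in $\mathcal{I}_r(D)$ uses the rule $\theta^2=D$ (respectively $\omega^2=\omega+\frac{D-1}{4}$). These relations have integer coefficients and are therefore compatible with reduction modulo $m$ and modulo $n$. Equivalently, $\psi$ is the canonical map $\mathcal{O}_L/mn\mathcal{O}_L \to \mathcal{O}_L/m\mathcal{O}_L \times \mathcal{O}_L/n\mathcal{O}_L$, which is the Chinese remainder theorem for the comaximal ideals $m\mathcal{O}_L$ and $n\mathcal{O}_L$.

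Then restrict $\psi$ to $\mathcal{G}_{mn}(D)$. Let $Q(a,b)$ be the integral norm form: $a^2-Db^2$ in the first case, and $a^2+ab+\frac{1-D}{4}b^2$ in the second. It is a polynomial with integer coefficients, so
\[
Q(a,b)\equiv 1 \pmod{mn} \iff Q(a,b)\equiv 1 \pmod m \text{ and } Q(a,b)\equiv 1 \pmod n,
\]
again because $\gcd(m,n)=1$. Hence $\psi$ maps $\mathcal{G}_{mn}(D)$ bijectively onto $\mathcal{G}_m(D)\times\mathcal{G}_n(D)$.

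It remains to note that each $\mathcal{G}_r(D)$ is a group under multiplication. It is the kernel of the norm $\alpha\mapsto\alpha\bar\alpha$, and this norm is multiplicative because conjugation $\sqrt D\mapsto-\sqrt D$ (respectively $\omega\mapsto 1-\omega$) is a ring automorphism of $\mathcal{I}_r(D)$. Each element of the kernel has inverse $\bar\alpha$. Since $\psi$ is multiplicative and bijective on these sets, it is the required canonical group isomorphism.

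The only point needing care is the ring-homomorphism check in the case $D\equiv 1\pmod 4$, including when $m$ or $n$ is even. There one cannot pass to $\mathbb{Z}[\sqrt D]$, but working with the basis $\{1,\omega\}$ of $\mathcal{O}_L$ avoids the issue, since $\omega$ satisfies the integral relation $\omega^2=\omega+\frac{D-1}{4}$ and everything reduces compatibly.
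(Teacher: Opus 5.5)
Your proposal is correct and takes the same approach as the paper, which simply states that the result follows from the Chinese remainder theorem and gives no further argument. Your write-up supplies the details that justification leaves implicit: the ring-level isomorphism $\mathcal{O}_L/mn\mathcal{O}_L \cong \mathcal{O}_L/m\mathcal{O}_L \times \mathcal{O}_L/n\mathcal{O}_L$, compatibility of the integral norm form with reduction, the fact that $\mathcal{G}_r(D)$ is a group, and the case of even moduli when $D\equiv 1 \pmod 4$.
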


    
\section{Proofs of Theorems \ref{T1.1}, \ref{T1.2} and \ref{T1.3}}
 
\begin{proof}[Proof of Theorem \ref{T1.1}] 
Let 
$X = w^{\frac{N+1}{p}}.$ Then 

\begin{equation} \label{e3}
    X^p = w^{N+1}.
\end{equation}
Assume that $N$ is a prime. Since $
\left(\frac{D}{N}\right) = -1$, by Theorem \ref{T2.1} we have $|\mathcal{G}_N(D)| = N + 1.$ Consequently, By Lagrange’s theorem,
\begin{equation}\label{e4}
w^{N+1} \equiv 1 \pmod{N}.
\end{equation}
From \eqref{e3} and \eqref{e4}, we get
\[
X^p \equiv 1 \pmod{N}.
\]
Thus,
\begin{equation}\label{eq3}
X^p - 1 = (X - 1)\Phi_p(X) \equiv 0 \pmod{N}.
\end{equation}
By hypothesis,
\[
X \not\equiv 1 \pmod{N}.
\]
Since $N$ is prime, the ring $\mathcal{I}_N(D)$ has no zero divisors. Therefore, from \eqref{eq3}, we must have
\[
\Phi_p(X) \equiv 0 \pmod{N}.
\]

\medskip

Conversely, assume that
\[
\Phi_p(X) \equiv 0 \pmod{N},
\]where $X = w^{\frac{N+1}{p}} = w^{c_Dkp^{\ell-1}}$.
Define
\[
\beta = w^{c_D k} \in \mathcal{G}_N(D).
\]
Then
\[
X = \beta^{p^{\ell-1}}.
\]
From $\Phi_p(X) \equiv 0 \pmod{N}$ and the identity $$x^p - 1 = (x-1) \Phi_p(x),$$ it follows that
\[
X^p \equiv 1 \pmod{N},
\]
and hence
\begin{equation}\label{eq4}
\beta^{p^\ell} \equiv 1 \pmod{N}.
\end{equation}

Suppose, for contradiction, that $N$ is composite. Let $q$ be a prime divisor of $N$ such that $q \le \sqrt{N}$. Reducing \eqref{eq4} modulo $q$, we obtain
\[
\beta^{p^\ell} \equiv 1 \pmod{q}.
\]
Thus, the $\mathrm{ord}_q(\beta)$ divides $p^\ell$, where $\mathrm{ord}_q(\beta)$ denotes the order of $\beta$ in   the group $\mathcal{G}_q(D)$. \\

If $\mathrm{ord}_q(\beta) = p^j$ with $j < \ell$, then
\[
\beta^{p^{\ell-1}} \equiv 1 \pmod{q},
\]
which implies
\begin{equation} \label{e7}
    X \equiv 1 \pmod{q}.
\end{equation}

We have
\[
\Phi_p(1) = p.
\]
Since $\Phi_p(X) \equiv 0 \pmod{q}$, using \eqref{e7} and the value of $\Phi_p(1)$ we obtain  $q = p$, which is impossible as
\[
N = c_D k p^\ell - 1.
\]
Thus,
\[
\mathrm{ord}_q(\beta) = p^\ell.
\]

Let $\beta_1 \equiv \beta \pmod{q}$. By Lagrange’s theorem,
\[
\mathrm{ord}_q(\beta_1) \mid |\mathcal{G}_q(D)| = q - \left(\frac{D}{q}\right) = q \pm 1.
\]
Hence,
\begin{equation}\label{eq5}
p^\ell \le q \pm 1.
\end{equation}
Since $p$ and $q$ are odd primes, $p^\ell \neq q+1$. Consequently,
\[
p^\ell < q.
\]

Using $q \le \sqrt{N}$, we get
\[
p^{2\ell} < q^2 \le N = c_D k p^\ell - 1,
\]
which implies
\[
p^{2\ell} < c_D k p^\ell.
\]
Dividing by $p^\ell$, we obtain
\[
p^\ell < c_D k,
\]
which contradicts the assumption $c_D k < p^\ell$. Therefore, $N$ must be prime. This completes the proof.
\end{proof}
\begin{proof}[Proof of Theorem \ref{T1.2}]
Let $X = w^{c_D k}$. By condition (i), we have
\begin{equation}\label{e9}
    \Phi_p(X^{p^{j-1}}) \equiv 0 \pmod{N},
\end{equation}
for some integer $j$ with $1 \le j \le \ell$. Fix such an integer $j$ satisfying $1 \le j \le \ell$ for which \eqref{e9} holds.\\

Suppose, for the sake of contradiction, that $N$ is composite. Let $q$ be a prime divisor of $N$ such that $q \le \sqrt{N}$. Reducing \eqref{e9} modulo $q$, we obtain
$\Phi_p(X^{p^{j-1}}) \equiv 0 \pmod{q}$. From this and the identity $$x^p - 1 = (x-1) \Phi_p(x),$$ we have $X^{p^j} \equiv 1 \pmod{q}$. This concludes that 
the multiplicative order of $X$ in $\mathcal{G}_q(D)$ divides $p^j$. If $\mathrm{ord}_q(X) = p^s$ for some $s < j$, then
$X^{p^s} \equiv 1 \pmod{q}$,
which implies
$$\Phi_p(1) = p \equiv \Phi_p(X^{p^s}) \equiv \Phi_p(X^{p^{j-1}}) \equiv 0 \pmod{q}.$$
This forces $q = p$, which is impossible since $q \mid N = c_D k p^\ell - 1$. Therefore,
$\mathrm{ord}_q(X) = p^j$.

By Lagrange’s theorem, we must have
$p^j \mid |\mathcal{G}_q(D)| = q \pm 1$.
Since $p$ and $q$ are odd primes, it follows that
$p^j < q \le \sqrt{N}$.
Squaring both sides, we obtain
$p^{2j} < N < c_D k p^\ell$.
Taking logarithms to the base $p$, we obtain
\begin{equation}\label{e10}
    2j < \log_p(c_D k) + \ell.
\end{equation}
This inequality holds for each fixed $j$ with $1 \le j \le \ell$ for which \eqref{e9} holds. 
Thus, \eqref{e10} contradicts condition $(ii)$. Therefore, $N$ must be prime.
\end{proof}
\begin{proof}[Proof of Theorem \ref{T1.3}]
Since $N$ is an odd prime and $\left(\frac{D}{N}\right) = -1$, Theorem \ref{T2.1} implies that $|\mathcal{G}_N(D)| = N + 1$. Hence, 
\begin{equation}\label{e11}
  w^{N+1} = w^{c_D k p^\ell} \equiv 1 \pmod{N}.  
\end{equation}

Consider the sequence
\[
w^{c_D k}, \; w^{c_D k p}, \; \dots, \; w^{c_D k p^\ell}.
\]
If $w^{c_D k} \equiv 1 \pmod{N}$, then $(i)$ holds. Otherwise, by \eqref{e11} the last term of the sequence is $1$ modulo $N$. Therefore, there exists a smallest integer $r$ with $1 \leq r \leq \ell$ such that
\[
w^{c_D k p^r} \equiv 1 \pmod{N}.
\]

Using the factorization $x^p - 1 = (x - 1)\Phi_p(x)$ with $x = w^{c_D k p^{r-1}}$, we obtain
\[
w^{c_D k p^r} - 1 = \bigl(w^{c_D k p^{r-1}} - 1\bigr)\Phi_p\bigl(w^{c_D k p^{r-1}}\bigr) \equiv 0 \pmod{N}.
\]

By the minimality of $r$, we have
\[
w^{c_D k p^{r-1}} \not\equiv 1 \pmod{N}.
\]
Since $N$ is prime, it follows that
\[
\Phi_p\bigl(w^{c_D k p^{r-1}}\bigr) \equiv 0 \pmod{N}.
\]Setting $j = r - 1$, we obtain $0 \leq j < \ell$, which gives condition (ii). This finishes the proof.
\end{proof}

\section{Proof of Theorem \ref{T1.4} and some Examples}
\begin{proof}[Proof of Theorem \ref{T1.4}]
As $N$ is an odd prime and $\left(\frac{D}{N}\right) = -1$, Theorem~\ref{T2.1} implies that the group $\mathcal{G}_N(D)$ is cyclic of order $N+1$. Moreover, $N+1$ is even, so there exist an odd integer $u$ and a positive integer $s$ such that $N+1 = 2^s u$. Thus, we have
\begin{equation}\label{e12}
    w^{ 2^s \cdot u} = w^{N+1} \equiv 1 \pmod{N}.
\end{equation}

Consider the sequence $w^u,\, w^{2u},\, w^{2^2 u},\, \dots,\, w^{2^s u}$. 
If $w^u \equiv 1 \pmod{N}$, then condition (a) holds. Otherwise, by \eqref{e12} the last term of the sequence is $1$ modulo $N$. Therefore, there exists a smallest integer $r+1 \le s$ such that
$w^{ 2^{r+1}u} \equiv 1 \pmod{N}$. By the minimality of $r+1$, we have
$w^{2^ru} \not\equiv 1 \pmod{N}$.
Since $N$ is prime, $\mathbb{Z}_N[\sqrt{D}]$ is a field. Hence, the equation $x^2 \equiv 1 \pmod{N}$ has exactly two solutions, namely $\pm 1$. Therefore,
$(w^{ 2^ru})^2 \equiv 1 \pmod{N}$ and $w^{2^ru} \not\equiv 1 \pmod{N}$ together imply that
$w^{2^ru} \equiv -1 \pmod{N}$. Thus, condition (b) holds, and the proof is complete.
\end{proof}
We now illustrate how Theorem~\ref{T1.4} can be used to test the primality of integers through the following example.\\

\noindent
\textbf{Example.}
Let $N = 35$ and $D = -3$. Then, $\left(\frac{-3}{35}\right) = -1$. Suppose, for the sake of contradiction, that $N$ is an odd prime; then Theorem~\ref{T2.1} implies $|\mathcal{G}_{35}(-3)| = 36 = 9 \cdot 2^2$. Thus, $u = 9$ and $s = 2$.

Choose $w = 3 + 3\sqrt{-3} \in \mathcal{G}_{35}(-3)$. A direct computation shows that $w^9 \equiv 29 \pmod{35}$, which is not congruent to $\pm 1$, and $w^{18} \equiv 29^2 \equiv 1 \pmod{35}$. Thus, $w^u \not\equiv 1 \pmod{35}$ and $w^{u \cdot 2^r} \not\equiv -1 \pmod{35}$ for all $r$ satisfying $0 \leq r < 2$. Therefore, by Theorem~\ref{T1.4}, we reach a contradiction, and hence $N$ must be composite.
\section{Algorithm and Complexity} 
Using the factorization $\Phi_p(x)(x - 1) = x^p - 1$, the cyclotomic condition $\Phi_p\bigl(w^{c_D k p^{j-1}}\bigr) \equiv 0 \pmod{N}$ can be expressed in a computationally efficient form as
$w^{c_D k p^j} \equiv 1 \pmod{N}$ and $\gcd\bigl(w^{c_D k p^{j-1}} - 1, N\bigr) = 1$.

To initialize the test, we efficiently generate a base element $w \in \mathcal{G}_N(D)$. Instead of utilizing a brute-force search for elements satisfying the unitary norm equation, we can select a non-zero element $z \in \mathcal{I}_N(D) \setminus \{0\}$ and set $w \equiv z/\overline{z} \pmod N$. If $N$ is prime, $\mathcal{I}_N(D)$ forms a finite field of order $N^2$, meaning every non-zero element is invertible. Thus, a uniformly chosen $z \in \mathcal{I}_N(D) \setminus \{0\}$ provides an invertible base element with probability $1$ whenever $N$ is prime, requiring $\mathcal{O}(1)$ expected operations. This establishes the following generalized primality testing algorithm.

\vspace{0.3cm}
\noindent \textbf{Algorithm 1: Generalized Primality Test over $\mathcal{G}_{N}(D)$}
\begin{enumerate}
    \item[(1)] \textbf{Input:} An odd prime $p$, and integers $k, {\ell} \ge 1$ such that $\gcd(c_D k, p) = 1$. Set $N = c_D k p^{\ell} - 1$.
    \item[(2)] \textbf{Generate base $w$:} Choose a non-zero element $z \in \mathcal{I}_N(D)$.
    \item[(3)] Compute the norm $\mathcal{N}(z) = z\overline{z} \pmod N$. If $\gcd(\mathcal{N}(z), N) \neq 1$, return ``$N$ is composite''.
    \item[(4)] Compute $w \equiv z / \overline{z} \pmod N$ and set $S_0 := w^{c_D k} \pmod N$.
    \item[(5)] If $S_0 \equiv 1 \pmod N$, return ``$N$ is a $p$-generalized strong probable prime''.
    \item[(6)] For $i=1$ to $\ell$ do:
    \item[(7)] \quad Compute $S_i := S_{i-1}^p \pmod N$.
    \item[(8)] \quad If $S_i \equiv 1 \pmod N$, then:
    \item[(9)] \quad \quad If $\gcd(S_{i-1}-1, N) = 1$, then:
    \item[(10)] \quad \quad \quad Set $j := i$ and go to Step 17.
    \item[(11)] \quad \quad Else:
    \item[(12)] \quad \quad \quad return ``$N$ is composite''.
    \item[(13)] \quad \quad end if
    \item[(14)] \quad end if
    \item[(15)] end for
    \item[(16)] return ``$N$ is composite''.
    \item[(17)] If $2j < \log_p(c_D k) + \ell$, then:
    \item[(18)] \quad return ``$N$ is a $p$-generalized strong probable prime''.
    \item[(19)] Else:
    \item[(20)] \quad return ``$N$ is prime''.
\end{enumerate}
\vspace{0.3cm}

\begin{thm}
For fixed $p$ and $k$, the computational complexity of Algorithm~1 is $\tilde{\mathcal{O}}(\log^2 N)$.
\end{thm}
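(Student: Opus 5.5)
The plan is to count bit operations step by step in Algorithm~1, with all arithmetic taking place in $\mathcal{I}_N(D)$. By the Remark after the definition of $\mathcal{G}_n(D)$, we may work in the representation $\{a+b\sqrt{D}: a,b\in\mathbb{Z}/N\mathbb{Z}\}$. A product $(a+b\sqrt{D})(c+d\sqrt{D}) = (ac+Dbd)+(ad+bc)\sqrt{D}$ costs a constant number of multiplications and additions in $\mathbb{Z}/N\mathbb{Z}$, since $D$ is fixed. With fast integer multiplication (Schönhage--Strassen or Harvey--van der Hoeven), each such operation costs $M(\log N)=\tilde{\mathcal{O}}(\log N)$ bit operations. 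So one multiplication in $\mathcal{I}_N(D)$ costs $\tilde{\mathcal{O}}(\log N)$.

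Next I bound the cost of each step. Steps (2)--(3) compute $z\overline{z}=a^2-Db^2$ and one gcd with $N$. A fast (half-gcd) algorithm does the gcd in $\tilde{\mathcal{O}}(\log N)$; even classical Euclid gives $\mathcal{O}(\log^2 N)$, which is within the claimed bound. In step (4), $\overline{z}^{-1}=\overline{z}\cdot\mathcal{N}(z)^{-1}$ needs one modular inversion, which is again a gcd computation, plus $\mathcal{O}(1)$ ring multiplications. Computing $S_0=w^{c_Dk}$ by square-and-multiply takes $\mathcal{O}(\log(c_Dk))$ multiplications, and this is $\mathcal{O}(1)$ because $p$ and $k$ are fixed (and $c_D$ depends only on $D$). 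The loop in steps (6)--(15) runs at most $\ell$ times. Each pass computes $S_{i-1}^p$ with $\mathcal{O}(\log p)=\mathcal{O}(1)$ multiplications and does one comparison. The gcd in step (9) runs at most once before the algorithm terminates, so it adds only a single gcd cost. Step (17) compares two numbers of size $\mathcal{O}(\log N)$, which is negligible.

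The total is therefore $\mathcal{O}(\ell\log p+\log(c_Dk))$ ring multiplications plus $\mathcal{O}(1)$ gcds. Since $N=c_Dkp^\ell-1$, we have $\ell\log p\le\log(N+1)$, so $\ell=\mathcal{O}(\log N)$. This gives $\mathcal{O}(\log N)$ multiplications, each costing $\tilde{\mathcal{O}}(\log N)$, for a total of $\tilde{\mathcal{O}}(\log^2 N)$. Equivalently, the loop is a single modular exponentiation $w^{(N+1)}$ computed in stages.

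The only delicate point is the gcd in step (9): in the element $S_{i-1}-1=a+b\sqrt{D}$ it must be interpreted concretely. I will take it to mean $\gcd(\mathcal{N}(S_{i-1}-1),N)$, which tests invertibility of $S_{i-1}-1$ in $\mathcal{I}_N(D)$. That costs one norm plus one integer gcd, so it stays within the bound. I also need to confirm that the random choice of $z$ contributes only $\mathcal{O}(1)$ expected work, as argued before Algorithm~1. Since step (3) either succeeds or certifies compositeness immediately, no repetition is needed at all.
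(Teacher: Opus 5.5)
Your argument is correct and follows the paper's route: both bound the loop by $\ell=\mathcal{O}(\log N)$ passes, each costing a constant number of multiplications in $\mathcal{I}_N(D)$ at $\tilde{\mathcal{O}}(\log N)$ apiece, with the gcd work negligible. Your version is more explicit about the $\mathcal{O}(\log p)$ multiplications per $p$-th power, the cost of $w^{c_Dk}$, the inversion, and the single gcd in step (9). One harmless slip: $\overline{z}^{-1}=z\cdot\mathcal{N}(z)^{-1}$, so $w=z^2\mathcal{N}(z)^{-1}$; your formula $\overline{z}\cdot\mathcal{N}(z)^{-1}$ would give $w=1$, though the operation count is unaffected.
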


\begin{proof}
The most expensive operations occur inside the loop. The algorithm performs at most $\ell$ modular exponentiations in the quotient ring $\mathcal{I}_N(D)$. Using fast multiplication (e.g., the Sch{\"o}nhage--Strassen algorithm), a single exponentiation requires $\tilde{\mathcal{O}}(\log N)$ bit operations. Since $\ell = \log_p\!\left(\frac{N+1}{c_D k}\right)$, the number of iterations grows linearly with $\log N$. Therefore, the total complexity of exponentiation is $\tilde{\mathcal{O}}(\log^2 N)$.

The greatest common divisor computation requires $\mathcal{O}(\log N)$ operations and is negligible compared to the exponentiation complexity. This completes the proof.
\end{proof}

\section{Pseudoprime and Carmichael Numbers}
We begin this section with the following definition.

\begin{defn}
For an integer $n \ge 2$ with $\gcd(n, D) = 1$, we define the arithmetic function:
\[ 
\mathcal{F}_D(n) := n - \left(\frac{D}{n}\right), 
\]
where $\left(\frac{D}{n}\right)$ denotes the Kronecker symbol.
\end{defn}
Since $|\mathcal{G}_n(D)| = \mathcal{F}_D(n)$ when $n$ is an odd prime, the following result is immediate.

\begin{prop}\label{P5.2}
Let $p$ be an odd prime and let $\alpha \in \mathcal{G}_p(D)$. Then,
\begin{equation}\label{e13}
\alpha^{\mathcal{F}_D(p)} \equiv 1 \pmod{p}.
\end{equation}
\end{prop}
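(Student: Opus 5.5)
The plan is to reduce Proposition~\ref{P5.2} to Lagrange's theorem in the finite group $\mathcal{G}_p(D)$, using Theorem~\ref{T2.1} to identify its order with $\mathcal{F}_D(p)$. The definition of $\mathcal{F}_D$ assumes $\gcd(p,D)=1$, and Theorem~\ref{T2.1} assumes $p \nmid D$, so I will work under this standing hypothesis.

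First I check that $\mathcal{G}_p(D)$ really is a group under the multiplication of $\mathcal{I}_p(D)$. For $D \equiv 2,3 \pmod 4$ the defining form $a^2 - Db^2$ is the norm $z\overline{z}$. Since conjugation $a+b\sqrt{D}\mapsto a-b\sqrt{D}$ is a ring automorphism of $\mathcal{I}_p(D)$, the norm is multiplicative. Elements of norm $1$ are units with inverse $\overline{z}$, so $\mathcal{G}_p(D)$ is a subgroup of $\mathcal{I}_p(D)^\times$. For $D \equiv 1 \pmod 4$, the form $a^2+ab+\frac{1-D}{4}b^2$ is the norm of $a+b\omega$, and the same argument applies. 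Alternatively, since $p$ is odd, I can use the Remark to pass to the $\sqrt{D}$ model.

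Next I invoke Theorem~\ref{T2.1}: since $p$ is an odd prime with $p\nmid D$, we have $|\mathcal{G}_p(D)| = p - \left(\frac{D}{p}\right)$. For an odd prime $p$ the Kronecker symbol agrees with the Legendre symbol, so this order equals $\mathcal{F}_D(p)$. Lagrange's theorem then gives $\alpha^{|\mathcal{G}_p(D)|}=1$ in $\mathcal{G}_p(D)$, which is exactly \eqref{e13} read in $\mathcal{I}_p(D)$, i.e.\ modulo $p$.

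There is no real obstacle. The only delicate points are bookkeeping: the implicit hypothesis $p\nmid D$, without which $\mathcal{F}_D(p)$ is not defined and Theorem~\ref{T2.1} does not apply, and the identification of Kronecker and Legendre symbols at odd primes.
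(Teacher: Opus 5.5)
Your proposal is correct and is the same argument the paper uses: Theorem~\ref{T2.1} gives $|\mathcal{G}_p(D)| = p - \left(\frac{D}{p}\right) = \mathcal{F}_D(p)$, since the Kronecker and Legendre symbols agree at odd primes, and Lagrange's theorem finishes. Your check that $\mathcal{G}_p(D)$ is a group, and your flagging of the implicit hypothesis $p \nmid D$, supply details the paper leaves tacit; that hypothesis is genuinely needed, since for $p \mid D$ the element $\alpha=-1$ would give $(-1)^{p} = -1 \not\equiv 1 \pmod{p}$.
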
 
Proposition \ref{P5.2} ensures that an odd integer $N$ is composite if 
\begin{equation}\label{e14}
    \alpha^{\mathcal{F}_D(N)} \not\equiv 1\pmod{N}\; \text{ for some}\; \alpha \in \mathcal{G}_N(D).
\end{equation} 

In \cite{t:hal-05192417}, the authors showed that for $N = 2737$, the identity \eqref{e13} holds for all $\alpha \in \mathcal{G}_N(-2)$, thus failing to detect that $N$ is composite. However, we can successfully identify $N = 2737$ as composite by selecting $D=3$.

In this case, the Jacobi symbol is $\left(\frac{3}{2737}\right) = 1$, which yields $\mathcal{F}_3(2737) = 2736$. By choosing the specific element $\alpha = 2 + \sqrt{3} \in \mathcal{G}_{2737}(3)$, a direct computation using SageMath confirms that
$$
    (2+\sqrt{3})^{2736} \not\equiv 1 \pmod{2737}.
$$
 Thus, Proposition \ref{P5.2} ensures that $N=2737$ is composite. \\

The significance of Proposition \ref{P5.2} is therefore as follows: for an odd integer $N$, if the identity \eqref{e13} fails for some element of any group $\mathcal{G}_N(D)$, then $N$ is composite. Hence, Proposition \ref{P5.2} generalizes Theorem 3.2 of \cite{t:hal-05192417} and offers potential applications in primality testing.

Now, we introduce some definitions to prove a result analogous to Korselt's criterion, which states that a composite integer $N$ is a Carmichael number if and only if:
\begin{enumerate}
    \item $N$ is square-free, and
    \item for every prime $p \mid N$, we have $p - 1 \mid N - 1$.
\end{enumerate}

\begin{defn}
Let $N$ be an odd composite integer with $\gcd(N, D)=1$. Then $N$ is called a \textbf{$\mathcal{G}(D)$-pseudoprime to base $\alpha$}, where $\alpha \in \mathcal{G}_N(D)\setminus\{1,-1\}$, if $\alpha^{\mathcal{F}_D(N)} \equiv 1 \pmod{N}$.
\end{defn}

\begin{defn}
Let $N$ be an odd composite integer with $\gcd(N, D)=1$. Then $N$ is called a \textbf{$\mathcal{G}(D)$-Carmichael number} if it is a $\mathcal{G}(D)$-pseudoprime to every base $\alpha \in \mathcal{G}_N(D)$.
\end{defn}
We prove the following result.
\begin{thm} \label{T4.3}
Let $N>1$ be an odd square-free composite integer such that $\gcd(N, D)=1$. Then $N$ is a $\mathcal{G}(D)$-Carmichael number if and only if $\mathcal{F}_D(p) \mid \mathcal{F}_D(N)$ for every prime $p \mid N$.
\end{thm}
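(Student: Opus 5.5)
The plan is to mimic the classical proof of Korselt's criterion, using Proposition~\ref{P2.2} to split $\mathcal{G}_N(D)$ into prime components and Theorem~\ref{T2.1} to control each component. Write $N = p_1 p_2 \cdots p_r$ with distinct odd primes $p_i$, which is possible since $N$ is square-free. Each $p_i$ satisfies $p_i \nmid D$, because $\gcd(N,D)=1$. Applying Proposition~\ref{P2.2} repeatedly gives the canonical isomorphism
\[
\mathcal{G}_N(D) \simeq \mathcal{G}_{p_1}(D) \times \cdots \times \mathcal{G}_{p_r}(D),
\]
and this isomorphism is compatible with reduction modulo each $p_i$. By Theorem~\ref{T2.1}, each factor $\mathcal{G}_{p_i}(D)$ is cyclic of order $\mathcal{F}_D(p_i) = p_i - \left(\frac{D}{p_i}\right)$. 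The key reformulation is then: $\alpha^{M} \equiv 1 \pmod N$ holds for every $\alpha \in \mathcal{G}_N(D)$ if and only if the exponent of $\mathcal{G}_N(D)$, namely $\operatorname{lcm}_i \mathcal{F}_D(p_i)$, divides $M$.

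\emph{Sufficiency.} Assume $\mathcal{F}_D(p_i) \mid \mathcal{F}_D(N)$ for every $i$. Take any $\alpha \in \mathcal{G}_N(D)$ and reduce it modulo $p_i$ to get $\alpha_i \in \mathcal{G}_{p_i}(D)$. Proposition~\ref{P5.2} gives $\alpha_i^{\mathcal{F}_D(p_i)} \equiv 1 \pmod{p_i}$, hence $\alpha^{\mathcal{F}_D(N)} \equiv 1 \pmod{p_i}$ for each $i$. The Chinese remainder theorem (equivalently, Proposition~\ref{P2.2}) then gives $\alpha^{\mathcal{F}_D(N)} \equiv 1 \pmod N$. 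So $N$ is a $\mathcal{G}(D)$-Carmichael number.

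\emph{Necessity.} Assume $N$ is $\mathcal{G}(D)$-Carmichael and fix a prime $p_i \mid N$. Since $\mathcal{G}_{p_i}(D)$ is cyclic, choose a generator $g_i$ of order $\mathcal{F}_D(p_i)$. Using the product decomposition, take $\alpha \in \mathcal{G}_N(D)$ whose $p_i$-component is $g_i$ and whose other components are $1$. Then $\alpha^{\mathcal{F}_D(N)} \equiv 1 \pmod N$ forces $g_i^{\mathcal{F}_D(N)} = 1$ in $\mathcal{G}_{p_i}(D)$, so $\mathcal{F}_D(p_i) \mid \mathcal{F}_D(N)$.

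The main obstacle is a definitional subtlety in the necessity step: the definition of pseudoprime excludes the bases $\pm 1$. The element $\alpha$ constructed above is neither $1$ nor $-1$, since its other components equal $1 \ne -1$ (the $p_j$ are odd and $r \ge 2$) and $g_i \ne 1$ whenever $\mathcal{F}_D(p_i) > 1$. Even if $\alpha$ were excluded, $\alpha^{\mathcal{F}_D(N)} = 1$ holds trivially for $\alpha = \pm 1$ because $\mathcal{F}_D(N)$ is even ($N$ odd and the Kronecker symbol is $\pm1$), so there is no real problem. One also has to check that the Kronecker symbol $\left(\frac{D}{N}\right)$ in $\mathcal{F}_D(N)$ is well-defined, which holds since $\gcd(N,D)=1$. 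The argument does not need it to be multiplicative in any special way, only that $\mathcal{F}_D(N)$ is a fixed integer exponent.
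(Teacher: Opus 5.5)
Your proof is correct and follows essentially the same argument as the paper. Both use the CRT splitting from Proposition~\ref{P2.2} and the cyclicity and order from Theorem~\ref{T2.1}; both use a generator-based element for necessity and componentwise reduction for sufficiency. The paper picks a single $\alpha$ that reduces to a generator modulo every $p_i$ at once, whereas you build one such element per prime, which is an inessential difference. Your observation that excluding the bases $\pm 1$ is harmless, because $\mathcal{F}_D(N)$ is even, is a worthwhile detail the paper leaves implicit.
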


\begin{proof}
Let $N = p_1 p_2 \cdots p_r$, where $p_1, p_2, \dots, p_r$ are distinct primes. Suppose that $N$ is a $\mathcal{G}(D)$-Carmichael number. Then $\alpha^{\mathcal{F}_D(N)} \equiv 1 \pmod{N}$ for all $\alpha \in \mathcal{G}_N(D)$. By Theorem \ref{T2.1}, the group $\mathcal{G}_{p_i}(D)$ is cyclic for each $i$. Let $\alpha_i$ be a generator of $\mathcal{G}_{p_i}(D)$. By Proposition \ref{P2.2}, there exists $\alpha \in \mathcal{G}_N(D)$ such that $\alpha \equiv \alpha_i \pmod{p_i}$ for all $i$. Since $N$ is a $\mathcal{G}(D)$-Carmichael number, we have $\alpha^{\mathcal{F}_D(N)} \equiv 1 \pmod{N}$, which implies $\alpha_i^{\mathcal{F}_D(N)} \equiv 1 \pmod{p_i}$. As $\alpha_i$ has order $\mathcal{F}_D(p_i)$, it follows that $\mathcal{F}_D(p_i) \mid \mathcal{F}_D(N)$ for all $i$.

 Conversely, assume that $\mathcal{F}_D(p_i) \mid \mathcal{F}_D(N)$ for all $i$. Let $\alpha \in \mathcal{G}_N(D)$, and  let $\alpha_i' \equiv \alpha \pmod{p_i}$. Since the order of $\mathcal{G}_{p_i}(D)$ is $\mathcal{F}_D(p_i)$ by Theorem \ref{T2.1}, we have ${\alpha_i'}^{\mathcal{F}_D(p_i)} \equiv 1 \pmod{p_i}$. Since $\mathcal{F}_D(p_i) \mid \mathcal{F}_D(N)$, there exists an integer $m'$ such that $\mathcal{F}_D(N) = m'\cdot \mathcal{F}_D(p_i)$. Hence, $\alpha^{\mathcal{F}_D(N)} \equiv ({\alpha_i}'^{\mathcal{F}_D(p_i)})^{m'} \equiv 1 \pmod{p_i}$. This holds for each $p_i$ and $N$ is square-free. Therefore,  $\alpha^{\mathcal{F}_D(N)} \equiv 1 \pmod{N}$. Hence, $N$ is a $\mathcal{G}(D)$-Carmichael number. This finishes the proof.
\end{proof}
 Theorem~\ref{T4.3} is a generalization of Theorem~3.11 of~\cite{t:hal-05192417}. 

\section{Computational Results}

To validate the applicability of Algorithm~1 and its $\tilde{\mathcal{O}}(\log^2 N)$ theoretical complexity, we implemented the algorithm in SageMath. We performed experiments over three different quadratic fields to show that it is effective in both real and imaginary settings.

Table \ref{tab:benchmark} summarizes the execution times for primes discovered during this search. For the imaginary cases, we tested $D=-2$ ($N = 8 \cdot 3^\ell - 1$) and $D=-3$ ($N = 6 \cdot 5^\ell - 1$). Crucially, to demonstrate the algorithm's applicability over real quadratic fields ($D > 0$), we tested the field $L = \mathbb{Q}(\sqrt{5})$ by setting $D=5$ and $c_D=2$, successfully proving primality for integers of the form $N = 2 \cdot 3^\ell - 1$. The implementation generated the necessary base elements in expected $\mathcal{O}(1)$ time and proved primality deterministically in milliseconds across all structures.

{
\renewcommand{\arraystretch}{1.2}
\begin{longtable}{cccccc}
    \caption{Execution times for deterministic primality proving using Algorithm 1 across different fields.}
    \label{tab:benchmark} \\
    
    \hline
    \textbf{Field $D$} & \textbf{Value of $l$} & \textbf{Bit-size of $N$} & \textbf{Prime $p$} & \textbf{Result} & \textbf{Time (seconds)} \\
    \hline
    \endfirsthead
    
    \multicolumn{6}{c}{{\tablename\ \thetable{} -- continued from previous page}} \\
    \hline
    \textbf{Field $D$} & \textbf{Value of $l$} & \textbf{Bit-size of $N$} & \textbf{Prime $p$} & \textbf{Result} & \textbf{Time (seconds)} \\
    \hline
    \endhead
    
    \hline \multicolumn{6}{r}{{Continued on next page...}} \\
    \endfoot
    
    \hline
    \endlastfoot
    
    $-2$ & 10  & 19  & 3 & $N$ is Prime & 0.00076 \\
    $-2$ & 17  & 30  & 3 & $N$ is Prime & 0.00040 \\
    $-2$ & 50  & 83  & 3 & $N$ is Prime & 0.00069 \\
    $-2$ & 170 & 273 & 3 & $N$ is Prime & 0.00209 \\
    $-2$ & 184 & 295 & 3 & $N$ is Prime & 0.00364 \\
    $-2$ & 194 & 311 & 3 & $N$ is Prime & 0.00295 \\
    \hline
    $-3$ & 2   & 8   & 5 & $N$ is Prime & 0.00091 \\
    $-3$ & 5   & 15  & 5 & $N$ is Prime & 0.00029 \\
    $-3$ & 11  & 29  & 5 & $N$ is Prime & 0.00042 \\
    $-3$ & 28  & 68  & 5 & $N$ is Prime & 0.00089 \\
    $-3$ & 65  & 154 & 5 & $N$ is Prime & 0.00130 \\
    $-3$ & 72  & 170 & 5 & $N$ is Prime & 0.00137 \\
    \hline
    $5$  & 2   & 5   & 3 & $N$ is Prime & 0.00022 \\
    $5$  & 3   & 6   & 3 & $N$ is Prime & 0.00022 \\
    $5$  & 7   & 13  & 3 & $N$ is Prime & 0.00044 \\
    $5$  & 23  & 38  & 3 & $N$ is Prime & 0.00046 \\
    $5$  & 27  & 44  & 3 & $N$ is Prime & 0.00061 \\
    $5$  & 35  & 57  & 3 & $N$ is Prime & 0.00119 \\
    $5$  & 62  & 100 & 3 & $N$ is Prime & 0.00147 \\
    $5$  & 131 & 209 & 3 & $N$ is Prime & 0.00199 \\
\end{longtable}
}

\section{Concluding Remarks}
Let $N = 2k p^{\ell}-1$ be an integer, where $p$ is an odd prime and $\ell, k$ are positive integers such that $p \nmid k$. Then there always exists a prime $Q$ such that $\left(\frac{Q}{N}\right) = -1$. The existence of such a prime $Q$ is guaranteed by~\cite{MR4470173} together with the law of quadratic reciprocity for odd positive integers. 

Henceforth, let $Q_N$ denote a prime satisfying $\left(\frac{Q_N}{N}\right) = -1$. Then the order of the group $\mathcal{G}_N(Q_N)$ is $N+1$ whenever $N$ is an odd prime. Therefore, using the method of this article, one can prove the following analogues of Theorems~\ref{T1.1}, \ref{T1.2}, and \ref{T1.3}.

\begin{thm}
Let $N = 2k p^{\ell} - 1$ be an integer, where $p$ is an odd prime and $\ell, k$ are positive integers such that $p\nmid k$ and $2 k < p^\ell$. Suppose there exists an element $w \in \mathcal{G}_N(Q_N)$ such that 
$w^{\frac{N+1}{p}} \not\equiv 1 \pmod{N}$. Then $N$ is prime if and only if
$$
\Phi_p\left(w^{\frac{N+1}{p}}\right) \equiv 0 \pmod{N},
$$
where $\Phi_p(x)$ denotes the $p$-th cyclotomic polynomial.
\end{thm}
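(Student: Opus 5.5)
The plan is to repeat the proof of Theorem~\ref{T1.1} with $D$ replaced by $Q_N$ and $c_D$ replaced by $2$. Throughout, set $X = w^{\frac{N+1}{p}} = w^{2kp^{\ell-1}}$, so that $X^p = w^{N+1}$.

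\emph{Forward direction.} Assume $N$ is prime. Since $Q_N$ is prime, it is square-free. Also $\left(\frac{Q_N}{N}\right) = -1$ forces $N \nmid Q_N$. So Theorem~\ref{T2.1} applies with $D = Q_N$ and gives $|\mathcal{G}_N(Q_N)| = N+1$. Lagrange's theorem then yields $X^p = w^{N+1} \equiv 1 \pmod N$, hence $(X-1)\Phi_p(X) \equiv 0 \pmod N$. Since $Q_N$ is a non-residue modulo $N$, $\mathcal{I}_N(Q_N)$ is the field $\mathbb{F}_{N^2}$. We have $X \not\equiv 1$ by hypothesis, so $\Phi_p(X) \equiv 0 \pmod N$.

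One caveat: when $Q_N \equiv 1 \pmod 4$, the paper's $\mathcal{I}_N$ uses $\omega$. By the Remark (since $N$ is odd) we may work in $\{a+b\sqrt{Q_N}\}$ throughout. Case 2 of Theorem~\ref{T2.1} shows the two descriptions give the same group order, so nothing changes.

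\emph{Converse.} Assume $\Phi_p(X) \equiv 0 \pmod N$ and, for contradiction, that $N$ is composite. Set $\beta = w^{2k}$, so $X = \beta^{p^{\ell-1}}$ and $\beta^{p^\ell} = X^p \equiv 1 \pmod N$. Choose a prime $q \mid N$ with $q \le \sqrt N$. Then $q$ is odd, and $q \ne p$ because $N \equiv -1 \pmod p$.

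We next need $q \nmid Q_N$ in order to apply Theorem~\ref{T2.1} at $q$. If $q = Q_N$, then $Q_N \mid N$ and the Jacobi symbol $\left(\frac{Q_N}{N}\right)$ would be $0$, not $-1$. So $q \nmid Q_N$, and $|\mathcal{G}_q(Q_N)| = q \pm 1$.

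Reducing modulo $q$, $\mathrm{ord}_q(\beta) \mid p^\ell$. If the order were a proper divisor, then $X \equiv 1 \pmod q$, so $0 \equiv \Phi_p(X) \equiv \Phi_p(1) = p \pmod q$, contradicting $q \ne p$. Hence $\mathrm{ord}_q(\beta) = p^\ell$, and $p^\ell \mid q \pm 1$.

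Now $p^\ell$ is odd while $q \pm 1$ is even, so $p^\ell \le (q\pm1)/2 < q$. Combining with $q \le \sqrt N$ gives $p^{2\ell} < q^2 \le N < 2kp^\ell$, i.e.\ $p^\ell < 2k$. This contradicts the hypothesis $2k < p^\ell$, so $N$ is prime.

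\emph{Main obstacle.} The only step that is not a verbatim transfer from Theorem~\ref{T1.1} is checking that the auxiliary prime $Q_N$ behaves well modulo every prime factor $q$ of the possibly composite $N$, namely $q \nmid Q_N$. This is what the Jacobi-symbol argument above handles. The ring-structure identification for $Q_N \equiv 1 \pmod 4$ is handled by the Remark. Everything else follows the proof of Theorem~\ref{T1.1} line by line.
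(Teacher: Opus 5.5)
Your proof is correct and takes the paper's approach. The paper states this theorem as an analogue of Theorem~\ref{T1.1} obtained by rerunning that proof with $D = Q_N$ and $c_D = 2$, which is what you do; your explicit checks that $\left(\frac{Q_N}{N}\right) = -1$ forces $q \nmid Q_N$ for every prime $q \mid N$ (so Theorem~\ref{T2.1} applies at $q$), and that $2p^\ell \mid q \pm 1$ by parity, supply details the paper's proof of Theorem~\ref{T1.1} leaves implicit.
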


\begin{thm}
Let $N = 2k p^{\ell} -1$ and  $w \in \mathcal{G}_N(Q_N)$. Suppose there exists an integer $j$ with $1 \le j \le \ell$ such that:
\begin{enumerate}
    \item[(i)] $\Phi_p(w^{2k p^{j-1}}) \equiv 0 \pmod{N}$,
    \item[(ii)] $2j \ge \log_p( 2k) + \ell$.
\end{enumerate}
Then $N$ is prime.
\end{thm}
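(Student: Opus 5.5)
The plan is to repeat the argument of Theorem~\ref{T1.2} with $D$ replaced by $Q_N$ and $c_D$ replaced by $2$. The only field-specific ingredient used there is Theorem~\ref{T2.1}, which holds for any square-free $D$. A prime is square-free, so Theorem~\ref{T2.1} applies to $D=Q_N$ for every odd prime $q\nmid Q_N$ and gives $|\mathcal{G}_q(Q_N)| = q\pm1$.

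Set $X = w^{2k}$. Condition (i) reads $\Phi_p(X^{p^{j-1}})\equiv 0 \pmod N$. Since $x^p-1=(x-1)\Phi_p(x)$, this gives $X^{p^j}\equiv 1\pmod N$.

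Suppose, for contradiction, that $N$ is composite. Since $N$ is odd, we can choose an odd prime $q\mid N$ with $q\le\sqrt N$.

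First we need $q\nmid Q_N$, so that $\mathcal{G}_q(Q_N)$ is the group of Theorem~\ref{T2.1}. The Jacobi symbol $\left(\frac{Q_N}{N}\right)=-1$ is nonzero, so $\gcd(Q_N,N)=1$, and hence $q\ne Q_N$.

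Next, reduce modulo $q$. We get $X^{p^j}\equiv1\pmod q$, so the order of $X$ in $\mathcal{G}_q(Q_N)$ divides $p^j$. Suppose the order were $p^s$ with $s<j$. Then $X^{p^{j-1}}\equiv 1 \pmod q$, so $p=\Phi_p(1)\equiv\Phi_p(X^{p^{j-1}})\equiv 0\pmod q$. That forces $q=p$, which is impossible since $p\mid N+1$. Hence the order is exactly $p^j$.

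By Lagrange's theorem, $p^j \mid q\pm1$. Both $p$ and $q$ are odd, so $p^j\neq q+1$, and therefore $p^j<q\le \sqrt N$. Squaring gives $p^{2j}<N<2kp^\ell$, which means $2j<\log_p(2k)+\ell$. This contradicts (ii), so $N$ is prime.

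The only step that is not a direct transcription of the proof of Theorem~\ref{T1.2} is checking that $q\nmid Q_N$. That follows from the nonvanishing of the Jacobi symbol, as above. Note that $Q_N$ may be even ($Q_N=2$), in which case the ring is $\mathbb{Z}[\sqrt2]/q$. For odd $q$ this is still covered by Theorem~\ref{T2.1}, Case 1, since $2\equiv 2\pmod 4$. Everything else goes through exactly as before.
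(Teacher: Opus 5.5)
Your proposal is correct and follows the paper's route: the paper obtains this theorem by rerunning the proof of Theorem~\ref{T1.2} with $c_D=2$ and $D=Q_N$. As there, you set $X=w^{2k}$, force $\mathrm{ord}_q(X)=p^j$ for a prime $q\le\sqrt N$, and use $p^j\mid q\pm1$ to reach $p^{2j}<2kp^\ell$. Your extra check that $q\nmid Q_N$, which follows from $\left(\frac{Q_N}{N}\right)\neq 0$ and is needed to invoke Theorem~\ref{T2.1}, is a detail the paper leaves implicit.
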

\begin{thm}
Let $N = 2k p^{\ell} -1$ be an odd prime. For each $w \in \mathcal{G}_N(Q_N)$, one of the following holds:
\begin{enumerate}
    \item[(i)] $w^{2k} \equiv 1 \pmod{N}$;
    \item[(ii)] there exists an integer $j$ with $0 \leq j < \ell$ such that
    $\Phi_p\bigl(w^{2k p^j}\bigr) \equiv 0 \pmod{N}$.
\end{enumerate}
 \end{thm}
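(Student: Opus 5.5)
This is the analogue of Theorem~\ref{T1.3} with $D=Q_N$ and $c_D=2$, and I would follow that proof essentially verbatim. The one new ingredient is the order of the group. Since $N$ is an odd prime and $Q_N$ is a prime with $\left(\frac{Q_N}{N}\right)=-1$, we have $Q_N\neq N$, because otherwise the symbol would be $0$. So $N\nmid Q_N$, and Theorem~\ref{T2.1} applies with $D=Q_N$ (a prime, hence square-free). It gives $|\mathcal{G}_N(Q_N)| = N-\left(\frac{Q_N}{N}\right)=N+1 = 2kp^\ell$. Here the Jacobi symbol agrees with the Legendre symbol since $N$ is prime. By Lagrange's theorem,
\[
w^{2kp^\ell}\equiv 1 \pmod N \quad\text{for every } w\in\mathcal{G}_N(Q_N).
\]

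Next I would consider the chain $w^{2k}, w^{2kp},\dots,w^{2kp^\ell}$, each term being the $p$-th power of the previous one. If $w^{2k}\equiv 1\pmod N$, condition (i) holds. Otherwise the last term equals $1$ and the first does not. So there is a least $r$ with $1\le r\le \ell$ such that $w^{2kp^r}\equiv 1$, and by minimality $w^{2kp^{r-1}}\not\equiv 1 \pmod N$.

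Put $x=w^{2kp^{r-1}}$ and use $x^p-1=(x-1)\Phi_p(x)$. This gives $(x-1)\Phi_p(x)\equiv 0\pmod N$ in $\mathcal{I}_N(Q_N)$. Because $Q_N$ is a non-residue mod $N$, the polynomial $t^2-Q_N$ is irreducible over $\mathbb{F}_N$, so $\mathcal{I}_N(Q_N)\cong\mathbb{F}_{N^2}$ is a field (using the Remark that for odd $N$ we may work in $\mathbb{Z}[\sqrt{Q_N}]/N$ even when $Q_N\equiv 1\pmod 4$). Since $x-1\neq 0$, we get $\Phi_p(x)\equiv 0$. Taking $j=r-1\in[0,\ell)$ gives condition (ii).

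The only point needing care, and thus the main obstacle, is the first step: checking that the hypotheses of Theorem~\ref{T2.1} hold for $D=Q_N$, namely $N\nmid Q_N$ and the identification of the symbol. The rest is routine. I would also note that the specific form $N=2kp^\ell-1$ is used only to write $N+1=2kp^\ell$; the condition $p\nmid k$ is not needed for this direction.
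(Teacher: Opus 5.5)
Your proposal is correct and follows the paper's own route: the paper proves this theorem by the method of Theorem~\ref{T1.3}, namely $|\mathcal{G}_N(Q_N)| = N+1$ from Theorem~\ref{T2.1}, then a minimal index in the chain $w^{2k}, w^{2kp}, \dots, w^{2kp^\ell}$, and then the factorization $x^p-1=(x-1)\Phi_p(x)$ in the field $\mathcal{I}_N(Q_N)$. Your extra checks, that $Q_N \neq N$ so Theorem~\ref{T2.1} applies and that $t^2-Q_N$ is irreducible so $\mathcal{I}_N(Q_N)$ is a field, only make explicit what the paper leaves implicit.
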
   

\bibliographystyle{amsplain}
\bibliography{references}
\end{document}